\newtheorem{definition}{Definition}
\newtheorem{theorem}{Theorem}
\newtheorem{lemma}{Lemma}
\newtheorem{remark}{Remark}
\newtheorem{question}{Question}
\title{On NP-hard graph properties characterized by the spectrum}
\author{Omid Etesami\thanks{e-mail etesami@gmail.com}
\\
{\it\small IPM, Tehran, Iran}
\\[3pt]
and
\\[3pt]
Willem H. Haemers\thanks{e-mail haemers@uvt.nl}
\\
{\it\small Tilburg University, The Netherlands}
}
\date{}
\begin{document}
	
\maketitle

\begin{abstract}
Properties of graphs that can be characterized by the spectrum of the adjacency matrix of the graph have been studied
systematically recently.
Motivated by the complexity of these properties, we show that there are such properties for which testing whether a graph has that
property can be NP-hard
(or belong to other computational complexity classes consisting of even harder problems).
In addition, we discuss a possible spectral characterization of some well-known NP-hard properties.
In particular, for every integer $k\geq 6$ we construct a pair of $k$-regular cospectral graphs,
where one graph is Hamiltonian and the other one not.
\\[3pt]
Keywords: NP-hard problem, computational complexity, spectral characterization, cospectral graphs, Hamiltonian graph, Latin square.
\\
AMS subject classification: 05C45, 05C50, 68Q17.
\end{abstract}

\section{Spectral characterizations}
We consider properties of graphs which can be characterized from the spectrum of the adjacency matrix of the graph.
Examples of such properties are being bipartite, being a disjoint union of complete graphs, being regular,
and being regular with a given girth (length of the shortest cycle).
(For this and other used results on spectral characterizations we refer to \cite{DH} and \cite{BH}).
The eigenvalues of the adjacency matrix can be computed in polynomial time from the adjacency matrix,
and for all examples mentioned above the property can be obtained from the spectrum in polynomial time.
This observation motivates the following question.

\begin{question}\label{Q}
Does there exist a graph property that is computationally hard to check but that can be characterized by the spectrum?
\end{question}

An obvious candidate for such a property is the following: 	

\begin{definition}
A graph is said to be DS ({\lq}determined by its spectrum{\rq}) if its spectrum uniquely determines its isomorphism class.
\end{definition}

Several graphs have this property, and it is conjectured that almost all graphs are DS.
As mentioned above, being the disjoint union of complete graphs is characterized by the spectrum.
More precisely, a graph $G$ (with at least one edge) is the disjoint union of complete graphs if and only if $G$ has smallest
eigenvalue $-1$.
But we can say more.
If the smallest eigenvalue is $-1$, then every other eigenvalue $\lambda$ is a positive integer,
and corresponds to a complete graph of order $\lambda+1$.
Thus we have:

\begin{lemma}\label{C}
A disjoint union of complete graphs is DS.
\end{lemma}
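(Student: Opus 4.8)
The plan is to lean on the two observations recorded just before the lemma: that (among graphs with an edge) having smallest eigenvalue $-1$ characterizes the disjoint unions of complete graphs, and that in that situation each eigenvalue other than $-1$ pins down a single complete-graph component. First I would recall that the spectrum of a disjoint union of graphs is the multiset union of the spectra of its components, and that $K_n$ has spectrum $\{n-1,(-1)^{n-1}\}$ for $n\geq 2$, while $K_1$ contributes the single eigenvalue $0$. Writing $G=K_{n_1}\cup\cdots\cup K_{n_t}$, this means that the positive eigenvalues of $G$, counted with multiplicity, are exactly the numbers $n_i-1$ coming from the components with $n_i\geq 2$, while the multiplicity of the eigenvalue $0$ equals the number of isolated vertices (the $K_1$-components).

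The first genuine step is then to argue that the spectrum of $G$ determines the multiset $\{n_1,\dots,n_t\}$ of component orders: each positive eigenvalue $\lambda$ reconstructs, with the right multiplicity, a component $K_{\lambda+1}$, and the multiplicity of $0$ reconstructs the isolated vertices. Since a disjoint union of complete graphs is determined up to isomorphism by its multiset of component orders, this multiset in turn determines $G$ up to isomorphism.

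To upgrade this to the DS claim I must rule out cospectral mates that are not assumed in advance to be of this form, and here the characterization stated above does the essential work. If $H$ is any graph cospectral with $G$, then $H$ has the same smallest eigenvalue $-1$, so by that characterization $H$ is itself a disjoint union of complete graphs; having the same spectrum as $G$, it has the same multiset of component orders by the previous step, and therefore $H\cong G$. In other words, the conceptual point is that membership in the class ``disjoint union of complete graphs'' is itself spectrally determined, so proving DS reduces to checking that the spectrum is injective on this class.

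The remaining steps are routine once these facts are in place, so the only point needing care is the boundary case. When $G$ has no edges it is the empty graph, whose spectrum is all zeros; I would handle this separately by noting that a symmetric matrix with all eigenvalues $0$ is the zero matrix, so its only cospectral mate is the empty graph of the same order. I expect this isolated-vertex bookkeeping — keeping the eigenvalue $0$ cleanly separated from the eigenvalue $-1$ — to be the only real obstacle; everything else follows directly from the spectral decomposition of disjoint unions.
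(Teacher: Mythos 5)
Your proof is correct and takes essentially the same route as the paper's: the smallest-eigenvalue-$-1$ characterization forces any cospectral mate to also be a disjoint union of complete graphs, and the remaining eigenvalues $\lambda$ then reconstruct the components $K_{\lambda+1}$ with their multiplicities. Your explicit bookkeeping for isolated vertices (the eigenvalue $0$ and the edgeless case) is a minor refinement of a point the paper glosses over when it asserts that every eigenvalue other than $-1$ is a \emph{positive} integer.
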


The graph property {\lq}being DS{\rq} is by definition a property determined by the spectrum.
So if {\lq}being DS{\rq} is NP-hard we have an affirmative answer to the above question.
However, it is not known if this is the case.

\begin{remark}
Given a graph $G$ that is not DS, a witness for $G$ not being a DS is a graph $H$ that has the same spectrum as $G$ together with a
witness that $H$ is non-isomorphic with $G$.
Since graph non-isomorphism is in the complexity class AM (Arthur-Merlin protocol)
(see \cite{AB}),  being DS is also in the complexity class co-AM (complement of AM).
\end{remark}

In the next section we will see that the answer to Question~\ref{Q} is affirmative anyhow.
Motivated by this result we investigate some famous NP-hard graph properties on being characterized by the spectrum.
Since being regular with a given girth (shortest cycle) is spectrally characterized,
we thought that being regular with a given longest cycle could be a good candidate.
We show that this idea doesn't work by constructing infinitely many pairs of cospectral regular graphs,
where one is Hamiltonian and the other one not.

\section{Encoding data in the spectrum}

We show that we can encode $n$ bits of information inside the spectrum of a graph with $O(n^2)$ vertices algorithmically in a way that
the $n$ bits can be recovered from the spectrum.
This shows that any property of $n$ bits (within a computational complexity class) can be translated as a property on graphs (within a
similar computational complexity class) that is characterizable by the spectrum.
	
In the following, for concreteness we instantiate the above general approach by considering the property of an $m$-vertex graph
being 3-colorable (which can be seen as a property on the $n = {m \choose 2}$ bits that specify the adjacencies of the graph.

\begin{definition}
For a given integer $m$, consider an ordering of all pairs $(u, v): 1 \le u < v \le m$. For $1 \le i \le {m \choose 2}$, let $(u_i, v_i)$
be the $i$'th pair in the ordering.
Given a graph $H$ with $m$ vertices, create the graph $G = T(H)$ with vertex set $S_1 \cup S_2 \cup \cdots \cup S_{m \choose 2}$, where $S_i$
are disjoint vertex sets with $|S_i| = i + 1$.
Two vertices $x$ and $y$ ($x\neq y$) of $G$ are adjacent if $x\in S_i$ and $y\in S_i$ for some $i$ for which $(u_i, v_i)$ is an edge of $H$.
\end{definition}


\begin{definition}
We say a graph $G$ has property $P$ if $G = T(H)$ for some graph $H$ that is 3-colorable.
\end{definition}	
	
\begin{lemma}
The property $P$ defined above for a graph $G$ can be determined from the spectrum of $G$.
\end{lemma}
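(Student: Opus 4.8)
The plan is to exploit the fact that $G=T(H)$ is, by construction, always a disjoint union of complete graphs, so that Lemma~\ref{C} lets us reconstruct $G$ from its spectrum; the sizes of the clique components then encode the edges of $H$ in a uniquely decodable way. First I would analyze the structure of $G=T(H)$. Since two vertices are adjacent only when they lie in a common block $S_i$ with $(u_i,v_i)\in E(H)$, there are no edges between distinct blocks, and inside $S_i$ we obtain the complete graph $K_{i+1}$ when $(u_i,v_i)$ is an edge of $H$ and $i+1$ isolated vertices (that is, $i+1$ copies of $K_1$) otherwise. Hence the connected components of $G$ are exactly one $K_{i+1}$ for each edge-block together with a collection of isolated vertices coming from the non-edge blocks, so $G$ is a disjoint union of complete graphs.

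Next I would apply Lemma~\ref{C}: such a graph is DS, so the spectrum of $G$ recovers its isomorphism class, equivalently the multiset of orders of its complete components. Concretely, using that the smallest eigenvalue is $-1$ (or $G$ is edgeless), each positive eigenvalue $\lambda$ of multiplicity $\mu$ contributes $\mu$ cliques $K_{\lambda+1}$, while the multiplicity of the eigenvalue $0$ counts the isolated vertices. The decoding step is the heart of the argument: because the block sizes $i+1$ run over the \emph{distinct} integers $2,3,\dots,{m\choose2}+1$, every value $\ge 2$ occurs as a block size at most once, so a component $K_{s}$ with $s\ge 2$ can only arise from the edge-block $S_{s-1}$. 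Therefore the multiset of component orders that are $\ge 2$ is precisely $\{\,i+1:(u_i,v_i)\in E(H)\,\}$, and reading off which values $i+1$ occur recovers the edge set of $H$ exactly, hence $H$ itself relative to the fixed ordering of pairs. Once $H$ is reconstructed we simply test whether $H$ is $3$-colorable.

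Finally I would argue that this decides property $P$ for an arbitrary input graph $G$, not only for graphs already assumed to be of the form $T(H)$. If the spectrum is not that of a disjoint union of complete graphs whose component orders have the required shape (distinct sizes $\ge 2$ drawn from $\{2,\dots,{m\choose2}+1\}$, together with the correct number of isolated vertices for a consistent value of $m$), then no graph with that spectrum is a $T(H)$ and $P$ fails; otherwise we reconstruct the unique candidate $H$ as above and test $3$-colorability. Since every graph that actually has property $P$ is a disjoint union of complete graphs and hence DS, all of its cospectral mates are isomorphic to it and share property $P$, so $P$ is invariant under cospectrality and the procedure above reads it off from the spectrum alone. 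The only real obstacle is the bookkeeping in this last step, namely verifying that the recovered multiset of clique sizes is genuinely realizable as some $T(H)$; but the distinctness of the block sizes makes this verification routine and the edge-decoding unambiguous.
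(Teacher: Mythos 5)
Your proposal is correct and takes essentially the same approach as the paper's proof: apply Lemma~\ref{C} to recover the multiset of clique sizes from the spectrum, verify that there is at most one clique of each size in $\{2,\dots,{m \choose 2}+1\}$ (and none larger) for a consistent $m$, then decode the unique $H$ and test its $3$-colorability. You simply spell out the edge-decoding and realizability bookkeeping that the paper leaves implicit.
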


\begin{proof}
By Lemma~\ref{C}, it is possible to check using its spectrum whether $G$ is a disjoint union of cliques,
and if this is the case we also have the sizes of the cliques.
Thus given $G$, we can figure out whether it is of the form $T(H)$ for some $H$:
We just need to check that there is at most one clique of each size $k$ for $2 \le k \le {m \choose 2} + 1$,
and no clique of size ${m \choose 2} + 2$ or larger.
Furthermore, when $G = T(H)$ for some $H$, we can easily deduce the unique $H$ from the spectrum of $G$.
In particular, the spectrum of $G$ has all the information on whether $H$ is 3-colorable or not.
\end{proof}
	
\begin{lemma}
The property $P$ defined above for a graph $G$ is $NP$-hard to compute.
\end{lemma}

\begin{proof}
We reduce the 3-coloring problem to whether a graph $G$ has the property $P$:
Given a graph $H$, we create in polynomial time the graph $G = T(H)$.
Then $H$ is 3-colorable if and only if $G$ has property $P$.
\end{proof}	

Putting together the above two lemmas, we have:

\begin{theorem}\label{P}
There is a property $P$ of graphs that is characterizable by the spectrum, but is NP-hard to compute.
\end{theorem}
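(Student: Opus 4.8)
The plan is to combine the two lemmas that immediately precede the theorem, since Theorem~\ref{P} is exactly their conjunction. The construction $G = T(H)$ is the pivot: it transforms an arbitrary $m$-vertex graph $H$ into a disjoint union of cliques whose multiset of clique sizes encodes the full adjacency information of $H$. The whole argument rests on the fact, recorded in Lemma~\ref{C}, that a disjoint union of complete graphs is DS; this is what lets the spectrum recover the clique sizes, and hence $H$, and hence the $3$-colorability status of $H$.

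Concretely, I would argue in two directions. First, for the spectral characterization, I would invoke the preceding lemma stating that $P$ can be read off from the spectrum: given a graph $G$, compute its spectrum, test via the smallest-eigenvalue criterion whether $G$ is a disjoint union of cliques, and if so read off the multiset of clique sizes from the positive eigenvalues. Because the sets $S_i$ have pairwise distinct sizes $i+1$, the clique of size $i+1$ (if present as a complete graph rather than a set of isolated vertices) tells us whether the $i$'th pair $(u_i,v_i)$ is an edge of $H$, so the spectrum determines $H$ uniquely whenever $G$ is of the form $T(H)$. In particular the spectrum determines whether $G$ has property $P$, which is precisely the claim that $P$ is characterizable by the spectrum.

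Second, for NP-hardness, I would exhibit the polynomial-time reduction from $3$-colorability already described in the preceding lemma: given $H$, build $G = T(H)$ in time polynomial in $m$ (the construction produces $\sum_{i=1}^{\binom{m}{2}}(i+1) = O(m^4)$ vertices, which is polynomial), and observe that by definition $G$ has property $P$ if and only if $H$ is $3$-colorable. Since $3$-colorability is NP-hard, so is deciding $P$. Assembling these two halves yields a single property $P$ that is simultaneously spectrally characterizable and NP-hard, proving the theorem.

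The step I expect to require the most care is the uniqueness of the decoding $T(H) \mapsto H$, which underwrites the spectral characterization. One must check that no spurious clique collisions occur: the sizes $|S_i| = i+1$ are strictly increasing and hence distinct, and no clique of size $\binom{m}{2}+2$ or larger ever arises, so the size of each clique pins down the index $i$ unambiguously. The only genuine subtlety is distinguishing an $S_i$ that forms a true clique (when $(u_i,v_i)$ is an edge) from one that contributes $i+1$ isolated vertices (when it is not); but the spectrum records the isolated vertices as extra zero eigenvalues and the cliques as eigenvalues equal to their size minus one, so both cases are recoverable. Once this bookkeeping is confirmed, the theorem follows immediately from the two lemmas.
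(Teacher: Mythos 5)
Your proposal is correct and follows essentially the same route as the paper: the theorem is exactly the conjunction of the two preceding lemmas, with the spectral characterization resting on Lemma~\ref{C} (and the injectivity of the decoding $T(H)\mapsto H$ via the strictly increasing clique sizes $|S_i|=i+1$) and the NP-hardness coming from the polynomial-time reduction $H\mapsto T(H)$ from $3$-colorability. Your extra bookkeeping on distinguishing a true clique $S_i$ from $i+1$ isolated vertices via the zero eigenvalues is a sound and slightly more explicit treatment of a point the paper passes over quickly, but it does not change the argument.
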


\begin{remark}
If instead of starting from the 3-colorability problem of graph $H$, we started with some problem that is undecidable (uncomputable),
then we would get a property that is characterizable by the spectrum, but is undecidable.
If we started with a problem that was PSPACE-complete, we would get a property of graphs that is also PSPACE-complete but
characterizable by the spectrum.
And we could do the same for many other complexity classes.
\end{remark}

\section{Cospectral graphs and NP-hard properties}\label{c}

For a number of graph properties in P
pairs of cospectral regular graph have been constructed where one has the property and the other one not.
See~\cite{BCH} for having a perfect matching, \cite{H} for vertex and edge connectivity, and \cite{HS} for the diameter.
The regularity requirement makes the construction more complicated, but also a lot more interesting.
The reason is that cospectrality for regular graphs with respect to the adjacency matrix $A$ implies cospectrality with
respect to several other kinds of matrices, such as the Laplacian matrix $L=D-A$
($D$ is the diagonal matrix with the vertex degrees on the diagonal),
the signless Laplacian $Q=D+A$, and the normalized Laplacian $L_N=D^{-\frac{1}{2}}AD^{-\frac{1}{2}}$.
In this section we construct such pairs of cospectral regular graphs for some famous NP-hard graph properties.


\subsection{Latin square graphs}

A good source for such pairs of graphs are the Latin square graphs defined as follows:
Given an $m\times m$ Latin square $L$, the vertices of the Latin square graph $G(L)$ are
the $m^2$ entries of $L$, two vertices being adjacent whenever the entries are in the same row, the same column,
or have the same symbol.
A Latin square graph $G(L)$ of order $m^2$ is a strongly regular graph with spectrum
$\{3m-3,(m-3)^{3m-3},(-3)^{m^2-3m+2}\}$
(exponents indicate multiplicities), so the spectrum only depends on $m$.
Therefore, two Latin squares of the same size give cospectral regular graphs.
A coclique (indepedent set of vertices) in $G(L)$ consists of a set of entries in $L$ all in different rows,
columns and with different symbols.
If such a set has cardinality $m$ (which is obviously the maximum) it is called a transversal.
It is known (see~\cite{W} for this, and other used facts about transversals in Latin squares)
that for even $m\geq 4$ there exist Latin squares with and without a transversal,
which shows that there are cospectral regular graphs with different independence number.
The chromatic number of a Latin square graph $G(L)$ is at least $m$, and equality holds if and only if $L$ has
$m$ disjoint transversals, which means that it has an orthogonal mate.
For $m\geq 4$, $m\neq 6$ there exists Latin squares with and without an orthogonal mate.
This shows that there exist cospectral Latin square graphs with different chromatic numbers.

Since Latin square graphs are regular the spectrum of the complement follows from the spectrum of $G(L)$.
So by considering the complements we see that also the clique number, as well as the clique covering number are not
characterized by the spectrum.

\subsection{Hamiltonian graphs}

We mentioned that being regular with a given girth 
is characterized by the spectrum.
(Without the regularity condition, the statement is false.)
So being regular and Hamiltonian (having a cycle of length equal to the order), which is a NP-hard property
could be a candidate for being characterized by the spectrum.
We will show that this is not the case.
Latin square graphs are regular and Hamiltonian, and so are their complements when $m\geq 4$.
Thus we need a different kind of cospectral graphs, which can be made by the following method.

\begin{theorem}\label{GMswitching}
Let $G$ be a graph and let $X$ be a subset of the vertex set of $G$ which induces a regular subgraph.
Assume that each vertex outside $X$ is adjacent to $|X|$, $\frac{1}{2}|X|$ or $0$ vertices of $X$.
Make a new graph $G'$ from $G$ as follows.
For each vertex $v$ outside $X$ with $\frac{1}{2}|X|$ neighbors in $X$,
delete the $\frac{1}{2}|X|$ edges between $v$ and $X$, and join $v$ instead to the $\frac{1}{2}|X|$
other vertices in $X$.
Then the adjacency matrices of $G$ and $G'$ have the same spectrum.
\end{theorem}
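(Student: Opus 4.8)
The plan is to exhibit an orthogonal matrix $Q$ that conjugates the adjacency matrix $A$ of $G$ into the adjacency matrix of $G'$; since conjugation by an orthogonal matrix preserves the characteristic polynomial, cospectrality follows at once. Write $n=|X|$ and order the vertices so that those of $X$ come first. This puts $A$ in block form with diagonal blocks $B$ and $D$ and off-diagonal blocks $C$ and $C^{\top}$, where $B$ is the adjacency matrix of the (regular) subgraph induced on $X$, each row of $C$ records the neighbours inside $X$ of one vertex outside $X$, and $D$ records the adjacencies among the vertices outside $X$.

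First I would set $Q_X=\frac{2}{n}J-I$, with $J$ the all-ones matrix of size $n$, and let $Q$ be the block-diagonal matrix with $Q_X$ on the $X$-block and the identity elsewhere. A one-line computation using $J^{2}=nJ$ gives $Q_X^{2}=I$, so $Q_X$, being symmetric, is orthogonal, and hence so is $Q$. Because $Q^{2}=I$ the conjugate is simply $QAQ$, whose blocks are $Q_XBQ_X$, $Q_XC^{\top}$, $CQ_X$ and $D$. The $D$-block is untouched, matching the fact that switching leaves all adjacencies outside $X$ alone.

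Next I would check the two nontrivial blocks. For the diagonal block, the regularity of the subgraph on $X$ means $BJ=JB=kJ$ for some $k$, and on expanding $Q_XBQ_X=(\frac{2}{n}J-I)B(\frac{2}{n}J-I)$ the $J$-terms cancel and leave exactly $B$; this is the one place the regularity hypothesis is used. For the off-diagonal block I would examine a single row $c^{\top}$ of $C$, that is, the indicator vector of one outside vertex's neighbours in $X$, and compute $c^{\top}Q_X=\frac{2}{n}(c^{\top}\mathbf{1})\,\mathbf{1}^{\top}-c^{\top}$. Splitting into the three allowed cases $c^{\top}\mathbf{1}\in\{0,\frac{n}{2},n\}$ shows that the row is unchanged when the vertex has $0$ or all $n$ neighbours in $X$, and is replaced by its complement $\mathbf{1}^{\top}-c^{\top}$ exactly when it has $\frac{n}{2}$ neighbours, which is precisely the switching rule.

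The main obstacle, and the reason the hypotheses are stated as they are, is this case analysis on the off-diagonal block: it is only because the coefficient $\frac{2}{n}(c^{\top}\mathbf{1})$ equals $1$ in the half-case that the row turns into its exact complement, and only because it equals $0$ or $2$ in the two extreme cases that those rows stay fixed; any other number of neighbours would produce a non-$0/1$ entry, and $QAQ$ would then fail to be a graph adjacency matrix. Once all four blocks are matched, $QAQ$ is the adjacency matrix of $G'$, and since $Q$ is orthogonal, $G$ and $G'$ are cospectral.
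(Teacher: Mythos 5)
Your proof is correct and complete: the paper itself states this theorem without proof, attributing it to Godsil and McKay, and your argument is precisely the classical one from their work (also given in the cited book of Brouwer and Haemers) — conjugation by the block-diagonal orthogonal involution with $\frac{2}{|X|}J-I$ on the $X$-block and the identity elsewhere, with the regularity of the subgraph induced on $X$ fixing the diagonal block and the $0$, $\frac{1}{2}|X|$, $|X|$ case analysis turning each off-diagonal row into exactly the switched one. Nothing is missing, and your closing observation about why the degree restrictions are needed for $QAQ$ to remain a $0/1$ matrix is a correct account of the role of the hypotheses.
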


This result is due to Godsil and McKay, and the operation that changes $G$ into $G'$ is called
\emph{Godsil-McKay switching} (GM-switching).
The set $X$ will be called a \emph{(GM-)switching set}.

\begin{theorem}\label{H}
For every $k\geq 6$ there exists a pair of $k$-regular cospectral graphs, where one is Hamiltonian and the other one not.
\end{theorem}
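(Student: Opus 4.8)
The plan is to produce the pair by \emph{Godsil--McKay switching} (Theorem~\ref{GMswitching}): starting from a single $k$-regular graph $G$ equipped with a switching set $X$, the switched graph $G'$ is automatically cospectral with $G$. It then suffices to arrange that $G$ is Hamiltonian while $G'$ is not, and that both are $k$-regular. The first thing I would pin down is the regularity of $G'$. Switching leaves every vertex outside $X$ with the same degree (a vertex with $\frac12|X|$ neighbours in $X$ merely trades them for the $\frac12|X|$ others), and it leaves all edges inside $X$ untouched, so the only degrees that can move are those of the vertices of $X$. Writing $t$ for the number of vertices outside $X$ having exactly $\frac12|X|$ neighbours in $X$, a short count shows that after switching a vertex $u\in X$ is joined to exactly $t$ minus its old number of neighbours among these $t$ vertices. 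Hence $G'$ is again $k$-regular precisely when every vertex of $X$ is adjacent to exactly $t/2$ of them, and I would build this balance condition into $X$ from the start.

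Next comes Hamiltonicity. I would design $G$ so that a Hamilton cycle is visible by construction --- for instance, lay the vertices out along a cycle and add the remaining edges so as to reach degree $k$ without breaking that cycle --- and then choose the switching set $X$ so that the rerouting performed by the switch destroys it. The obstruction I would aim for in $G'$ is a failure of toughness: a vertex cut $S$ with $c(G'-S)\ge |S|+1$, where $c$ counts components. (The cheaper obstruction, an independent set $I$ with $|N(I)|<|I|$, is unavailable here, since in a regular graph $|N(I)|\ge|I|$ always.) Concretely, I would attach two interchangeable gadgets to $X$ whose points of attachment are exactly the $\frac12|X|$-neighbours of $X$: in $G$ the gadgets are laced into the cycle through $X$, whereas after switching their connections to $X$ are swapped so that some small set $S$ near $X$ splits the gadgets into more pieces than $|S|$. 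I would certify non-Hamiltonicity of $G'$ from this cut and exhibit the Hamilton cycle of $G$ directly.

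To cover every $k\ge 6$ I would let the gadgets grow with $k$: each port vertex keeps its single edge into $X$ while its internal degree is padded up, and the gadget interiors are taken regular so that only the ports interact with the switching set. The threshold $k\ge 6$ should then fall out of the bookkeeping --- one needs $|X|$ even and at least $4$, each port contributing $\frac12|X|$ edges to $X$, and enough internal edges both to pad the ports and to keep the separating set well defined --- so that small $k$ simply leaves no room for a balanced switching set together with a genuine separator.

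The step I expect to be the real obstacle is making the regularity and the non-Hamiltonicity coexist: the balance requirement ``each vertex of $X$ meets exactly half of the $t$ switchable vertices'' is rigid, and a naive assignment of gadgets to subsets of $X$ forces the $X$-degrees to disagree, so that $G'$ fails to be regular. Reconciling this symmetry with an \emph{asymmetric} attachment pattern --- asymmetric enough that swapping it opens a separating set in $G'$ but not in $G$ --- is the delicate part, and it is where I would expect the exact value of the threshold on $k$ to be forced. Checking afterwards that the claimed cut has no residual edges across it, and that the exhibited cycle in $G$ really is Hamiltonian, is then routine but must be done explicitly.
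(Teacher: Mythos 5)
Your setup is sound as far as it goes: GM-switching (Theorem~\ref{GMswitching}) is indeed the engine of the paper's proof, and your regularity analysis is correct --- in the paper's construction every vertex outside $X$ has exactly $\frac{1}{2}|X|$ neighbours in $X$, and every vertex of $X$ is adjacent to exactly half of the switchable vertices, which is precisely the balance condition you derive, so $G'$ is again $k$-regular. But the proposal stops at a programme exactly where the proof has to happen. You never construct $G$, $X$, or the gadgets, and the mechanism you propose for non-Hamiltonicity --- a vertex cut $S$ with $c(G'-S)\ge |S|+1$ that appears only after the switch --- is the step you yourself flag as unresolved. Nothing in the proposal shows that such a cut can coexist with the balance condition, and no candidate graph is exhibited on which the ``routine'' final verifications could even be attempted. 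As written, the argument establishes the theorem for no value of $k$.

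The paper sidesteps the difficulty you identify by not asking the switch to destroy Hamiltonicity at all. It builds a $k$-regular $G$ from four disjoint cycles with switching set $X=V_1\cup V_2$ of size $2(k-2)$, arranged so that $G'\cong G$ --- both graphs are Hamiltonian --- and the asymmetry is subtler: $G$ has no Hamiltonian cycle containing all edges of $E_3\setminus\{v,w\}$ (such a cycle is forced through $\{w,y\}$ and then cannot be completed), whereas $G'$ does have one. Lemma~\ref{LH} then amplifies this difference: replacing an edge $\{c,d\}$ by the gadget $H_k$ (the complete graph $K_{k+1}$ minus an edge, with pendant vertices $c$ and $d$) preserves $k$-regularity and makes the new graph Hamiltonian if and only if the old graph had a Hamiltonian cycle through $\{c,d\}$. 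Substituting $H_k$ on every edge of $E_3\setminus\{v,w\}$ --- all of which lie away from $X$, so the new vertices have $0$ neighbours in $X$ and $X$ remains a switching set --- yields cospectral $k$-regular graphs $\hat{G}$ (non-Hamiltonian) and $\hat{G'}$ (Hamiltonian). This forced-edge reduction is the idea your proposal is missing: it converts the global requirement that switching kill Hamiltonicity, which your own analysis shows is in tension with the rigid balance condition, into a local statement about which edges a Hamiltonian cycle can be made to use, and that is far easier to engineer compatibly with regularity. (Incidentally, the threshold $k\ge 6$ in the paper falls out simply from requiring the shortest of the four cycles, of length $k-3$, to be a genuine cycle, not from any constraint on separators.)
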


\begin{proof}
We define the graph $H_k$, which can be obtained from the complete graph $K_{k+1}$ by the deletion of one edge
$\{a,b\}$ (say), and adding two pendant vertices at $a$ and $b$.
The following observation is trivial, but relevant.

\begin{lemma}\label{LH}
Suppose $\{c,d\}$ is an edge in a $k$-regular graph $G$,
and let $\hat{G}$ be the $k$-regular graph obtained from $G$ by replacing the edge $\{c,d\}$ by $H_k$ with pendent vertices $c$ and $d$.
Then $\hat{G}$  is Hamiltonian if and only if $G$ has a Hamiltonian cycle that contains $\{c,d\}$.
\end{lemma}

First we construct a $k$-regular graph $G$ with a GM-switching set $X$ of order $2(k-2)$.
Start with four disjoint cycles:
$C_1=(V_1,E_1)$ of length $k-3$, $C_2=(V_2,E_2)$ of length $k-1$,
and $C_3=(V_3,E_3)$ and $C_4=(V_4,E_4)$, both of length $k-2$.
Choose an edge $\{x,y\}\in E_2$, an edge $\{v,w\}\in E_3$, and a vertex $z\in V_4$.
Insert all possible edges between $V_1\cup\{x\}$ and $V_3$, and between  $V_2\setminus\{x\}$ and $V_4$.
Next, delete the edges $\{w,x\}$ and $\{y,z\}$ and insert $\{w,y\}$ and $\{x,z\}$.
The graph $G$, thus obtained is regular of degree $k$ and the set $X = V_1\cup V_2$ is a GM-switching set.
Let $G'$ be the graph obtained by switching.
If in $G'$ we interchange $C_3$ and $C_4$, and then reverse the order of the vertices in $C_3$ and $C_4$,
we get an exact copy of $G$, which shows that $G$ and $G'$ are isomorphic.
For $k=6$ this is illustrated in Figure~\ref{G} (names between parentheses belong to $G'$).
One straightforwardly checks that $G$ (and $G'$) is Hamiltonian.
However, no Hamiltonian cycle in $G$ contains all edges of $E_3\setminus \{v,w\}$.
Indeed, a Hamiltonian cycle in $G$ must contain the edge $\{w,y\}$, and can not be completed if it also contains the edges of
$E_3\setminus \{v,w\}$.
However, one easily finds a Hamiltonian cycle in $G'$ that contains the edges of $E_3\setminus \{v,w\}$.
Now, we make larger graphs $\hat{G}$ and $\hat{G'}$ by replacing each edge of $E_3\setminus \{v,w\}$ by $H_k$.
Then $X$ remains a switching set, so $\hat{G}$ and $\hat{G'}$ are still cospectral, but by Lemma~\ref{H},
$\hat{G}$ is not Hamiltonian and $\hat{G'}$ is.
\end{proof}

\begin{figure}[h]
\epsfig{file = 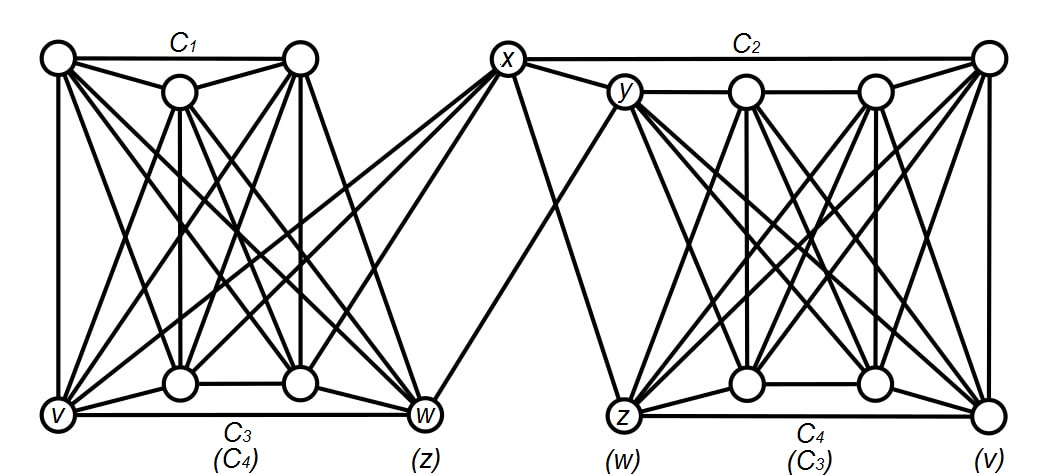,height=175pt,width=400pt}
\caption{Graph $G$ (and $G'$) for $k=6$}\label{G}
\end{figure}

\section{Final remarks}
The recent paper \cite{LWYL} also presents pairs of cospectral graphs where one is Hamiltonian and the other one not.
There the authors do not require regularity, but consider the generalised spectrum, 
which is the spectrum of the adjacency matrix together with the spectrum of the adjacency matrix of the complement.
Our examples in Section~\ref{c} are regular, and therefore also cospectral with respect to the adjacency matrix of the 
complement, and as mentioned earlier, with respect to many other types of matrices including the Laplacian $L$ and the 
signless Laplacian $Q$.

Being a disjoint union of complete graphs is not determined by the spectrum of $L$ or $Q$.
So we cannot copy the proof of Theorem~\ref{P} for these matrices.
However, we can modify the proof using the fact that a disjoint union of paths is determined by the Laplacian spectrum
as well as the signless Laplacian spectrum.
So we can conclude that Theorem~\ref{P} also holds if we consider $L$ or $Q$.

Many NP-hard graph properties are known, and there is no point in trying each of them, not even if we restrict to regular 
graphs.
There is, however, one NP-hard graph property for which it will be difficult to decide if it is characterized by the spectrum.
This is the problem of being $k$-regular with chromatic index (edge-chromatic number) equal to $k$.
(By Vizing's theorem the chromatic index is $k$ or $k+1$.)
The problem can be rephrased as:
{\lq}Does a given $k$-regular graph $G$ admit a partition of the edge set into perfect matchings?{\rq}
The answer is {\lq}no{\rq} if $G$ has odd order, and if $G$ is the Petersen graph, which is DS.
But we don't know of a cospectral pair where one graph has such a partition and the other one not.
In \cite{CGH} the chromatic index of the Latin square graphs, 
the strongly regular graphs of degree at most $18$ and their complements
has been determined; no two cospectral ones have a different chromatic index.

\end{document}